\newcommandx{\ebltodo}[2][1=]{\todo[linecolor=red,backgroundcolor=red!25,bordercolor=red,#1]{#2}}
{
  \color{olive}%
}%
{}
\DeclareMathAlphabet{\mathpzc}{OT1}{pzc}{m}{it}
\numberwithin{equation}{section}
\newtheorem{thm}[subsection]{Theorem}
\newtheorem*{cor*}{Corollary}
\newtheorem{lemma}[subsection]{Lemma}
\newtheorem{propos}[subsection]{Proposition}
\newtheorem*{thm*}{Theorem}
\newtheorem*{thma*}{Theorem A}
\newtheorem*{thmb*}{Theorem B}
\newtheorem*{thmc*}{Theorem C}
\theoremstyle{definition}
\newtheorem*{remark}{Remark}
\newcounter{consta}
\newcounter{constk}
\newcounter{constc}
\newcounter{constE}
\newcounter{constd}
\newcommand*\bigcdot{\mathpalette\bigcdot@{.5}}
\newcommand*\bigcdot@[2]{\mathbin{\vcenter{\hbox{\scalebox{#2}{$\m@th#1\bullet$}}}}}
\def\XXint#1#2#3{{\setbox0=\hbox{$#1{#2#3}{\int}$ }
\vcenter{\hbox{$#2#3$ }}\kern-.6\wd0}}
\DeclareMathOperator{\Leb}{Leb}
\DeclareMathOperator{\diam}{diam}
\DeclareMathOperator{\diff}{d}
\DeclareMathOperator\Mat{Mat}
\newcommand\SL{{\rm{SL}}}
\newcommand\SO{{\rm{SO}}}
\newcommand\Lie{{\rm Lie}}
\def\sl{{\mathfrak{sl}}}
\newcommand\Hom{{\rm{Hom}}}
\def\bbr{\mathbb{R}}
\def\bbc{\mathbb{C}}
\def\bbn{\mathbb{N}}
\def\R{\bbr}
\def\C{\bbc}
\def\N{\bbn}
\def\pfrak{\mathfrak{P}}
\def\pfrak{\mathfrak{p}}
\def\rfrak{\mathfrak{r}}
\def\tbf{\mathbf{t}}
\def\wbf{\mathbf{w}}
\def\vare{\varepsilon}
\def\zg0{Z_{G_\omega}(s)}
\def\zg{Z_G(s)}
\def\be{\begin{equation}}
\def\ee{\end{equation}}
\newcommand{\rhsc}{\delta}
\newcommand {\absolute}[1] {\left| {#1} \right|}
\newcommand {\norm}[1] {\left\| {#1} \right\|}
\newcommand\eng{\mathcal E}
\newcommand\egbd{C}
\newcommand{\hide}[1]{}
\newcommand{\sqf}{Q_0}
\title{Projection Theorems in the Presence of Expansions}
\author{K.\ W.\ Ohm}
\date{}
\address{Department of Mathematics, University of California, San Diego, CA 92093}
\email{kwohm@ucsd.edu}
\thanks{}
\address{Department of Mathematics, University of California, San Diego, CA 92093}
\email{zul003@ucsd.edu}
\thanks{}
\author{Z.\ Lin}
\begin{document}

\maketitle

\begin{abstract}
    We prove a restricted projection theorem for a certain one dimensional family of projections from $\R^n$ to $\R^k$. 
    
    The family we consider here arises naturally in the study of quantitative equidistribution problems in homogeneous dynamics.
\end{abstract}

\setcounter{tocdepth}{1}
\tableofcontents

\section{Introduction}\label{sec: introduction}
Restricted projection problems are intimately related to central questions in Fourier analysis and incidence geometry,
and have been much studied, e.g., by Mattila, Falconer, Bourgain and others. More recently, certain restricted projection theorems have also found striking applications in homogeneous dynamics. 

Let us recall the the classical Marstrand projection theorem: 
Let $K\subset\R^n$ be a compact subset, then for a.e.\ $v\in\mathbb S^{n-1}$
\be\label{eq: Marstrand}
\dim{\rm p}_{v}(K)=\min(1,\dim K),  
\ee
where ${\rm p}_v(w)=w\cdot v$ is the orthogonal projection in the direction of $v$ and here and in what follows $\dim$ denotes the Hausdorff dimension. Analogous statements hold more generally for orthogonal projection into a.e.\ $m$-dimensional subspace, with respect to the Lebesgue measure on ${\rm Gr}(m, n)$.  

Broadly speaking, restricted projection problems seek to obtain similar results as in~\eqref{eq: Marstrand} where $v$ is confined to a proper Borel subset $\mathsf B\subset \mathbb S^{n-1}$. 
Note, however, that without further restrictions on $\mathsf B$,~\eqref{eq: Marstrand} fails: e.g., if 
\[
\mathsf B=\{(\cos t, \sin t, 0): 0\leq t\leq 2\pi\}
\] 
is the great circle in $\mathbb S^2$ and $K$ is the $z$-axis, then ${\rm p}_v(K)=0$ for every $v\in \mathsf B$. 

To avoid degenerations of this nature, one may, e.g., consider a curve $\gamma:[0,1]\to\R^n$ satisfying that 
\[
\{\gamma'(r), \gamma''(r),\ldots, \gamma^{(n)}(r)\}\qquad\text{spans $\R^n$ for all $r\in[0,1]$.}
\] 
It is natural to inquire whether for almost all $r\in[0,1]$ the following holds
\be\label{eq: Rest Proj Intr}
\dim{\rm p}_{\gamma'(r)}(K)=\min(1,\dim K);
\ee
again $K\subset\R^n$ is a compact subset. 

Indeed~\eqref{eq: Rest Proj Intr} was conjectured by F\"assler and Orponen~\cite{FaOr} in dimension $3$. 
This conjecture was resolved by K\"aenm\"aki, Orponen, and Venieri \cite{kenmki2017marstrandtype} and Pramanik, Yang, and Zahl~\cite{PYZ} --- these works rely on the work of Wolff and Schlag on circular Kakeya sets~\cite{Wolff,Schlag}.

More recently, Gan, Guo, and Wang~\cite{GGW} have established~\eqref{eq: Rest Proj Intr} in all dimensions using decoupling inequalities for the moment curve due to Bourgain, Demeter, and Guth~\cite{BourDemGuth}.   

\smallskip

In this paper, we study a closely related family of projections. 
The consideration of these families are mainly motivated by the aforementioned applications to homogeneous dynamics.

Indeed we first study the problem in dimension $3$, where an elementary argument inspired by~\cite{OV-Planes} will be used. Then we use the main results in~\cite{GGW} to study a similar problem in all dimensions.   
    
Let us fix some notation in order to state the results of this paper. 
For every $t\geq 0$ and $r\in[0,1]$, let $\pi_{t,r}:\R^3\to \R^2$ be 
\be\label{eq: def pi r dim 3}
\pi_{t,r}(x,y,z)=\Bigl(e^t(x+ry+\tfrac{r^2}{2}z), y+rz\Bigr)
\ee

The following is one of the results of this paper.     

\begin{thm}\label{thm: main finitary}
    Let $1\leq \alpha\leq 3/2$, and let  $0<\delta_0\leq1$. 
Let $F\subset B_{\R^3}(0,1)$ be a finite set and let $\mu$ denote the uniform measure on $F$. Assume that  
\[
\mu(B_{\R^3}(w, \delta))\leq C_0\cdot \delta^\alpha\quad\text{for all $w\in \R^3$ and all $\delta\geq \delta_0$}
\]
where $C_0\geq 1$.

Let $0<\vare<\alpha/100$. For every $\delta\geq e^t\rhsc_0$, there is a subset there exists $\mathcal{E} = \mathcal{E}_{{t}, \delta} \subset [0,1]$ with $|\mathcal{E}|\ll_\vare \delta^{\star\vare}$ so that the following holds. 

Let $r\in [0,1]\setminus \mathcal E$, then there exists $F_{t,\delta,r}\subset F$ with 
\[
\mu(F\setminus F_{t,\delta,r})\ll_\vare \delta^{\star\vare}
\]
such that for all $w\in F_{t,\delta,r}$, we have 
\[
\mu\Bigl(\{w'\in F_{t, \delta, r}: \|\pi_{t,r}(w')-\pi_{t,r}(w)|\leq \delta\}\Bigr)\ll_\vare C_0e^{-t/10}\delta^{\alpha-\vare}
\] 
\end{thm}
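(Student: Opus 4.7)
The plan is to reduce the target estimate to an $L^1$-type bilinear bound via Fubini, establish this bilinear bound by combining a pointwise van der Corput estimate along the curve $\gamma(r) = (1, r, r^2/2)$ with the Frostman hypothesis on $\mu$, and then extract the nested exceptional sets $\mathcal{E}$ and $F\setminus F_{t,\delta,r}$ through a two-stage Markov argument.

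Let $N(r,w)$ denote the quantity on the left-hand side of the conclusion. With $\gamma(r)=(1,r,r^2/2)$ and $\gamma'(r)=(0,1,r)$ one has $\pi_{t,r}(v) = (e^{t}\gamma(r)\cdot v,\ \gamma'(r)\cdot v)$, and Fubini gives
\[
\int_0^1\!\!\int_F N(r,w)\,d\mu(w)\,dr \;=\; \iint L_{t,\delta}(w-w')\,d\mu(w)\,d\mu(w'),
\]
where
\[
L_{t,\delta}(v) := \bigl|\{r\in[0,1] : |\gamma(r)\cdot v|\le e^{-t}\delta \text{ and } |\gamma'(r)\cdot v|\le\delta\}\bigr|.
\]

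The key pointwise estimate is by van der Corput. Setting $f(r):=\gamma(r)\cdot v$, one has $f'(r)=\gamma'(r)\cdot v$ and $f''(r)\equiv v_3$, so van der Corput's second-derivative lemma applied to $|f(r)|\le e^{-t}\delta$ yields
\[
L_{t,\delta}(v) \ll \min\bigl(1,(e^{-t}\delta/|v_3|)^{1/2}\bigr).
\]
In the degenerate regime $|v_3|\ll e^{-t}\delta$, this bound degenerates to the trivial $L\le 1$; however, the system $|\gamma'(r)\cdot v|\le\delta$, $|\gamma(r)\cdot v|\le e^{-t}\delta$ forces first $|v_2|\lesssim\delta$ and then $|v_1|\lesssim\delta$, so $v$ is confined to a ball of radius $O(\delta)$. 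A finer analysis in this regime, using that $f$ is linear in $r$ with slope $v_2$ up to a correction of size $|v_3|$, gives an additional saving of order $e^{-t}\delta/|v_2|$ when $|v_2|$ is macroscopic.

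These bounds are combined through a dyadic decomposition jointly in $|v_3|$ and $|v|$, together with the Frostman estimate $\mu\times\mu\{|v|\le\rho\}\le C_0\rho^{\alpha}$ (valid for $\rho\ge\delta_0$). The Frostman hypothesis supplies the $\delta^{\alpha}$ factor; the van der Corput square-root supplies an exponential saving of order $e^{-t/2}$, comfortably better than the $e^{-t/10}$ threshold needed after the double Markov losses. A first application of Markov on the outer $r$-integral of $\int_F N(r,w)\,d\mu(w)$ produces $\mathcal{E}$ of measure $\delta^{\star\vare}$; a second application, performed uniformly for $r\notin\mathcal{E}$, produces $F\setminus F_{t,\delta,r}$ of $\mu$-mass $\delta^{\star\vare}$, yielding the pointwise bound $N(r,w)\ll_\vare C_0 e^{-t/10}\delta^{\alpha-\vare}$ for the surviving $(r,w)$.

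I expect the main technical obstacle to lie in the dyadic bookkeeping across these two regimes. The van der Corput bound is useful only when $|v_3|$ is macroscopic, while the degenerate regime must be handled by exploiting the near-linear $r$-dependence of $\gamma(r)\cdot v$ up to a correction of size $|v_3|$. The Frostman hypothesis is phrased on full $v$-balls and does not directly bound thin $v_3$-slabs, so care must be taken in the intermediate shells to keep the sums controlled by $C_0\delta^{\alpha}$ (modulo $\delta^{-\vare}$ losses). Balancing the two regimes so that the final sum closes with an exponential gain of at least $e^{-t/5}$ is precisely the point at which the elementary incidence-style ideas of~\cite{OV-Planes} enter the argument.
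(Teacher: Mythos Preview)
Your $L^1$/double-Markov scheme has a genuine gap: the bilinear integral $I=\iint L_{t,\delta}(w-w')\,d\mu\,d\mu$ can be much larger than $C_0 e^{-\beta t}\delta^{\alpha}$ for every $\beta>0$, so no amount of Markov recovers the pointwise bound. A clean obstruction is $\mu$ equal to arclength on the segment through the origin in the direction $(r_0^2/2,-r_0,1)$ for some fixed $r_0\in[1/2,1]$ (this is precisely the long axis of the tube $\pi_{t,r_0}^{-1}(B(0,\delta))$). Here $\alpha=1$, $C_0\asymp 1$, and for $v=u\cdot(r_0^2/2,-r_0,1)$ one has $\gamma(r)\cdot v=\tfrac{u}{2}(r-r_0)^2$, whence
\[
L_{t,\delta}(v)\asymp \min\bigl(1,\ (e^{-t}\delta/|u|)^{1/2}\bigr),\qquad I\asymp\int_0^1\min\bigl(1,(e^{-t}\delta/u)^{1/2}\bigr)\,du\asymp (e^{-t}\delta)^{1/2}.
\]
Two Markov steps then give only $N(r,w)\ll e^{-t/2}\delta^{1/2-O(\vare)}$, which is far weaker than $e^{-t/10}\delta^{1-\vare}$ once $\delta\ll e^{-t}$. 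The theorem does hold for this $\mu$---the exceptional set $\mathcal E$ is just a small neighbourhood of $r_0$---but the averaged quantity $I$ cannot see this. The underlying issue is that your van der Corput input produces the coupled factor $(e^{-t}\delta)^{1/2}$, not $e^{-t/2}\cdot\delta^{\alpha}$; the exponents of $e^{-t}$ and of $\delta$ are tied together, which is exactly why energy methods saturate well below the target once $\alpha>1/2$. Your closing sentence implicitly concedes this: you say the OV-Planes ideas ``enter'' at the balancing step, but in fact they replace the $L^1$ framework entirely.

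The paper does not run an $L^1$ argument. It argues by contradiction via a \emph{trilinear} incidence count. Assuming a set $F_{\rm hm}$ of points with large bad-$r$ sets has mass $\gg\delta^{\eta}$, one first localizes (Lemma~\ref{lem: small radius}) to a single dyadic annulus scale $b\ge\delta^{1-3\eta}$; this step already requires a separate projection estimate (Lemma~\ref{lem: potential theory}) to dispose of the small-radius regime. Then for each $w\in F'_{\rm hm}$ one selects three $\delta^{3\eta/2}$-separated parameters $r_1,r_2,r_3$ in the bad set and builds $E_j(w)\subset\mathsf D_b(w)$ of mass $\gg C_0\,b\,e^{-t/10}\delta^{\alpha-1-O(\eta)}$ consisting of $w'$ with $\|\pi_{t,r_j}(w)-\pi_{t,r_j}(w')\|\le\delta$. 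Counting $4$-tuples $(w,w_1,w_2,w_3)$ with $w_j\in E_j(w)$ two ways---a lower bound from $\prod_j\mu(E_j(w))$, and an upper bound from the fact that the three transversal constraints $(1,r_j,r_j^2)\cdot w=(1,r_j,r_j^2)\cdot w_j+O(e^{-t}\delta)$ pin $w$ to a set of diameter $O(e^{-t}\delta^{1-O(\eta)})$---yields a contradiction. The restriction $\alpha\le 3/2$ is exactly what makes the two counts balance (through the factor $b^{3-2\alpha}$ with $b\ge\delta^{1-3\eta}$). This three-parameter triangulation is the mechanism that produces the gain; a single-$r$ transversality bound integrated in $r$ cannot substitute for it.
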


\begin{remark}
    Throughout the paper, the notation $a\ll b$ and $a^{\star b}$ mean $a\leq Db$ and $a^{Db}$, respectively, where $D$ is some positive constant whose dependence is explicated in different statements. Also, for a Borel subset $\mathsf B\subset \R^d$, we denote the Lebesgue measure of $\mathsf B$ by $|\mathsf B|$. 
    \end{remark}

As mentioned earlier, the proof of Theorem~\ref{thm: main finitary} is based on elementary arguments inspired by~\cite{OV-Planes}.

However, this approach does not readily extend to higher dimensions, nor does it provide a clear path to overcoming the barrier $\alpha \leq 3/2$ in the context of Theorem~\ref{thm: main finitary}. We now present a similar general result on dimension improvement in irreducible $\SL_2(\R)$-representations. In dimensions $3, 5, 7$, and $11$ results of this nature drive the Margulis function estimate in \cite{LMWY}, see also \cite{LMW22}. The proof relies on deep ingredients. In particular, our argument makes crucial use of~\cite[Thm.~2.1]{GGW}.

Let $V$ be an irreducible representation of $\SL_2(\R)$ with $\dim V = n + 1$. We fix a norm on $V$. Note that since all norm on a finite dimensional vector space are equivalent, different choice of norms will only create a constant factor in the dimension estimate. Let
\begin{align*}
    u_r = \begin{pmatrix}
        1 & r\\
         & 1
    \end{pmatrix}
\end{align*}
and 
\begin{align*}
    a_{t} = \begin{pmatrix}
        e^{{t}/2} & \\
         & e^{-{t}/2}
    \end{pmatrix}.
\end{align*}

We have the following result on  dimension improvement under the push by $a_{t} u_r$ for a discretized measure in an irreducible representation of $\SL_2(\R)$. 

\begin{thm}\label{thm:Improving Dimension SL2 irrrep}
    Let $\alpha \in (0, n + 1)$, $C \geq 1$ and $0 < \delta_0 \leq 1$. Let $F$ be a finite set in $B_{V}(0, 1)$ and let $\mu$ denote the uniform measure on $F$. Assume that 
    \begin{align*}
        \mu(B_{V}(x, \delta)) \leq C \delta^{\alpha}
    \end{align*}
    for all $\delta \in [\delta_0, 1]$. Then the following holds for all $\epsilon \in (0, \frac{1}{10^4 n}\min\{\alpha, n + 1 - \alpha\})$.

    For all $\delta \in [\delta_0, \frac{1}{100}]$ and ${t} \gg_{\epsilon} 1$ with $e^{\frac{n}{2}{t}}\delta_0 \leq \delta \leq e^{-\frac{n}{2}{t}}$, there exists $\mathcal{E} = \mathcal{E}_{{t}, \delta} \subset [0,1]$ with $|\mathcal{E}| \ll_{\epsilon} e^{-\star\epsilon^2{t}}$ satisfying the following. 
    
    For all $r \in [0, 1] \setminus \mathcal{E}$, there exists $F_{t, \delta, r} \subset F$ with $\mu(F \setminus F_{t, \delta, r}) \ll_{\epsilon} e^{-\star\epsilon^2{t}}$ so that for all $w \in F_{t, \delta, r}$, we have
    \begin{align*}
        \mu(\{w' \in F_{t, \delta, r}: \|a_t u_r(w') - a_t u_r(w)\| \leq \delta\}) \ll_{\epsilon} C e^{-\frac{\varpi(\alpha){t}}{2}} \delta^{\alpha - \epsilon}
    \end{align*}
    where
    \begin{align*}
        \varpi(\alpha) = \max\{n\alpha - \lfloor\alpha\rfloor(\lfloor\alpha\rfloor + 1), \lfloor\alpha\rfloor(2n - \lfloor\alpha\rfloor + 1) - n\alpha\}.
    \end{align*}
    All the implicit constant here depends only on $n$. 
\end{thm}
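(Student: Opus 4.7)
The plan is to exploit the $\SL_2(\R)$-representation structure of $V$ to reduce the anisotropic ball estimate to projection estimates of the type treated by \cite[Thm.~2.1]{GGW}.

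\emph{Setup.} Fix a weight basis $e_0,\dots,e_n$ of $V$ satisfying $a_t\cdot e_j=e^{(n-2j)t/2}e_j$, and write $u_r=\exp(rX)$ with $X$ the raising operator $Xe_j\propto e_{j-1}$. Then for any $v\in V$, $\|a_t u_r v\|\le \delta$ is equivalent to $|(u_r v)_j|\le \delta_j:=e^{-(n-2j)t/2}\delta$ for every $j$. Writing $v=w'-w$, the quantity to bound is thus the $\mu$-mass of the parallelepiped $w+B_r$, where $B_r=\{v\in V:|(u_r v)_j|\le \delta_j\ \forall j\}$ is a coordinate-aligned box of sides $\delta_0\le\cdots\le\delta_n$ in the $u_r$-twisted basis.

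\emph{Reduction to moment-curve projections.} For each $k\in\{1,\dots,n+1\}$ put $V^{(k)}=\Span(e_0,\dots,e_{k-1})$ and define $\Pi_{r,k}:V\to V^{(k)}$ by $\Pi_{r,k}(v)=P_{V^{(k)}}(u_r v)$. Using $u_r=\exp(rX)$ and $u_s e_{n-m}\propto \partial_s^m(u_s e_n)$, one computes $\ker\Pi_{r,k}=\Span\bigl(\gamma(-r),\gamma'(-r),\ldots,\gamma^{(n-k)}(-r)\bigr)$, where $\gamma(s)=u_s e_n$ is (after relabeling coordinates) the moment curve $s\mapsto(s^n/n!,\ldots,s,1)$ in $\R^{n+1}$. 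The family $\{\Pi_{r,k}\}_{r\in[0,1]}$ therefore falls in the scope of \cite[Thm.~2.1]{GGW}. Applying that theorem together with dyadic pigeonholing over scales $\ge e^{nt/2}\delta_0$ produces an exceptional set $\mathcal{E}_k\subset[0,1]$ with $|\mathcal{E}_k|\ll_\epsilon e^{-\star\epsilon^2 t}$ and, for each $r\notin\mathcal{E}_k$, a subset $F^{(k)}_{t,\delta,r}\subset F$ with $\mu(F\setminus F^{(k)}_{t,\delta,r})\ll_\epsilon e^{-\star\epsilon^2 t}$ on which $(\Pi_{r,k})_*\mu$ obeys a $(\min(k,\alpha)-\epsilon)$-Frostman bound at all relevant scales.

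\emph{Combining two cases.} Set $k_0=\lfloor\alpha\rfloor$ and apply the above with both $k=k_0+1$ and $k=k_0$. For $k=k_0+1$, the pushforward is $(\alpha-\epsilon)$-Frostman on the $(k_0+1)$-dimensional space $V^{(k_0+1)}$, and covering $\Pi_{r,k_0+1}(B_r)$ (a box of sides $\delta_0\le\cdots\le\delta_{k_0}$) by balls of radius $\delta_0$ yields
\[
\mu(w+B_r)\ll_\epsilon C\,\delta_0^{\alpha-k_0}\prod_{j=1}^{k_0}\delta_j\ll_\epsilon C\,e^{-t(n\alpha-k_0(k_0+1))/2}\,\delta^{\alpha-\epsilon},
\]
the first term of $\varpi(\alpha)$. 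For $k=k_0$, the pushforward is $(k_0-\epsilon)$-Frostman on $V^{(k_0)}$, and the corresponding bound on the projected box of sides $\delta_0\le\cdots\le\delta_{k_0-1}$ reads $\mu(w+B_r)\ll_\epsilon C\prod_{j=0}^{k_0-1}\delta_j = C\,e^{-tk_0(n-k_0+1)/2}\,\delta^{k_0-\epsilon}$. The hypothesis $\delta\le e^{-nt/2}$ gives $\delta^{k_0-\alpha}\le e^{nt(\alpha-k_0)/2}$, converting this bound into $C\,e^{-t(k_0(2n-k_0+1)-n\alpha)/2}\,\delta^{\alpha-\epsilon}$, the second term of $\varpi(\alpha)$. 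Taking the better of the two bounds produces the stated exponent, with $\mathcal{E}=\mathcal{E}_{k_0}\cup\mathcal{E}_{k_0+1}$ and $F_{t,\delta,r}=F^{(k_0)}_{t,\delta,r}\cap F^{(k_0+1)}_{t,\delta,r}$.

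\emph{Main difficulty.} The principal obstacle is precisely identifying the family $\{\Pi_{r,k}\}$ with a moment-curve projection problem of \cite[Thm.~2.1]{GGW} and extracting that theorem in the finitary/quantitative form needed here (explicit Frostman losses, exceptional set sizes, and the good subsets $F_{t,\delta,r}$). The remaining linear-algebraic bookkeeping is routine, though the conversion from $\delta^{k_0-\epsilon}$ to $\delta^{\alpha-\epsilon}$ crucially requires the upper bound $\delta\le e^{-nt/2}$, and care is required to make the combined estimate uniform in $\delta$ across the entire allowed range.
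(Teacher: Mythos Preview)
Your proposal is correct and follows essentially the same strategy as the paper: identify the preimage of a $\delta$-ball under $a_t u_r$ as an anisotropic box, recognize the projections $\Pi_{r,k}$ onto the top $k$ weight spaces as moment-curve projections, apply~\cite[Thm.~2.1]{GGW} for both $k=\lfloor\alpha\rfloor$ and $k=\lfloor\alpha\rfloor+1$, cover the projected box by balls of the smallest side, and take the better of the two resulting exponents. The only organisational difference is that the paper first localises $\mu$ to dyadic cubes of side $\rho=e^{nt/2}\delta$ and applies GGW to the rescaled conditional measures at scale $e^{-nt}$, whereas you apply GGW directly to $\mu$ at scale $e^{-nt/2}\delta$ and then invoke the hypothesis $\delta\le e^{-nt/2}$ to convert the $\delta^{k_0}$ bound to $\delta^{\alpha}$; a short computation shows these two routes produce exactly the same exponent $\tfrac12\bigl(k_0(2n-k_0+1)-n\alpha\bigr)$, so the localisation is not essential.
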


\begin{remark}
    Note that if $\alpha$ varies in compact subset of $(0, n + 1)$, $\varpi(\alpha)$ is uniformly bounded away from $0$.  
\end{remark}

\begin{remark}
When $n = 2$, the representation is the adjoint representation of $\SL_2(\R)$. We have
\begin{align*}
\frac{1}{2}\varpi(\alpha) = \begin{cases}
\alpha & 0 < \alpha \leq 1,\\
\max\{2 - \alpha, \alpha - 1\} & 1 < \alpha \leq 2,\\
3 - \alpha & 2 < \alpha \leq 3.
\end{cases}
\end{align*}
See Figure~\ref{fig: improve dim 3}.  
\end{remark}

\begin{figure}[htbp]\label{fig: improve dim 3}
\centering
\begin{tikzpicture}
\begin{axis}[axis lines = middle, xmin=0, xmax=3, ymin=0, ymax=1, samples=100, 
  ytick={1}, yticklabels={$1$}, enlargelimits, 
  height=6cm, width=12cm, axis line style={thick}, xtick=\empty, yticklabels={}]
  
  \addplot[domain=0:1, thick] {x};
  
  \addplot[domain=1:2, thick] {max(2-x, x-1)};
  
  \addplot[domain=2:3, thick] {3-x};

  \node at (axis cs:2.93,0) [anchor=north west] {$3$};
  
  \node at (axis cs:0,1) [anchor=east] {$1$};

\end{axis}
\end{tikzpicture}
\caption{Graph of the piecewise function \( \varpi(\cdot) \)}
\end{figure}

\begin{remark}
    We remark that Theorem~\ref{thm:Improving Dimension SL2 irrrep} is a generalization of Theorem~\ref{thm: main finitary} when ${t} = |\log \delta|$. In this case, the pre-image of a $\delta$-disk under the expanded projections in Theorem~\ref{thm: main finitary} is of size $\delta^2 \times \delta \times 1$, which coincide with the pre-image of a $\delta$-ball under $a_{t} u_r$. 
\end{remark}

\section{Proof of Theorem~\ref{thm: main finitary}}\label{sec: proof of main 1}
In this section we will prove Theorem~\ref{thm: main finitary}. 
Let us begin by fixing some notation which will be used throughout this section. 

Recall that $\mu$ denotes the uniform measure on $F$. That is: 
\[
\mu(\mathsf B)=\frac{\#(\mathsf B\cap F)}{\#F}\qquad\text{for any Borel set $\mathsf B\subset \R^3$}.
\]

Since $t$ is fixed throughout the argument, we will write $\pi_r$ for $\pi_{t,r}$. 
For every $w\in F$, all $r\in [0,1]$, and all $b>0$, let 
\[
m^b(\pi_{r}(w))=\mu(\{w'\in F: \|\pi_r(w)-\pi_r(w')\|\leq b\}).
\]
More generally, given a subset $\mathsf B\subset \R^3$, let 
\[
m^b(\pi_{r}(w)|\mathsf B)=\mu(\{w'\in \mathsf B: \|\pi_r(w)-\pi_r(w')\|\leq b\}).
\] 
For all $w\in F$ and all $b>0$, let $\mathsf D_b(w)=\{w':b\leq \|w-w'\|\leq 2b\}$.

The following lemma is the main step in the proof of Theorem~\ref{thm: main finitary}.

\begin{lemma}\label{lem: main 1st}
The following holds for all small enough $\eta$ and all large enough $C$. 
Let $F_{\rm hm}\subset F$ denote the set of $w\in F$ so that  
\[
|\{r\in[\tfrac12,1]: m^\delta(\pi_{r}(w))\geq C_0e^{-t/10}\delta^{\alpha-18\eta}\}|\geq C\delta^{\eta}.
\]
Then $\mu(F_{\rm hm})\leq C \delta^{\eta}$. 
\end{lemma}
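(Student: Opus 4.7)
The plan is to reduce via Markov's inequality to bounding the symmetric double integral $\int\int W(w,w')\,d\mu(w)\,d\mu(w')$, where $W(w,w'):=\bigl|\{r\in[\tfrac12,1]:\|\pi_{t,r}(w)-\pi_{t,r}(w')\|\leq\delta\}\bigr|$, and then estimate this integral via a dyadic decomposition exploiting the polynomial structure of $\pi_{t,r}$. Setting $A:=C_0 e^{-t/10}\delta^{\alpha-18\eta}$, Markov's inequality on $[\tfrac12,1]$ together with Fubini reduces the claim to showing
\[
\int\int W\,d\mu\otimes d\mu \leq C^2 C_0 e^{-t/10}\delta^{\alpha-16\eta}.
\]

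To estimate $W$, I write $\Delta:=w'-w=(a,b,c)$ and observe that $\|\pi_{t,r}(w)-\pi_{t,r}(w')\|\leq\delta$ is equivalent to $|P(r)|\leq e^{-t}\delta$ and $|Q(r)|\leq\delta$, where $P(r)=a+br+\tfrac{c}{2}r^2$, $Q(r)=b+cr=P'(r)$, and $P''\equiv c$. Elementary polynomial estimates (localising $r$ near the critical point $r_0:=-b/c$) yield two facts: \emph{(i)} if $W(\Delta)>0$ and $\|\Delta\|$ exceeds a constant multiple of $\delta$, then $|c|$ is comparable to $\|\Delta\|$ and the critical value satisfies $|P(r_0)|=|a-b^2/(2c)|\ll e^{-t}\delta+\delta^2/|c|$, confining $\Delta$ to a thin neighbourhood (of thickness $\ll\delta^2/|c|$) of the $2$-dimensional surface $\{a=b^2/(2c)\}$; \emph{(ii)} in this regime $W(\Delta)\ll\min\bigl(\delta/|c|,\sqrt{e^{-t}\delta/|c|}\bigr)$. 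I then decompose $\int\int W$ dyadically by $|c|\in[s,2s]$ and estimate each shell via the Frostman bound $\mu\times\mu(\{\|\Delta\|\leq 3s\})\ll C_0 s^\alpha$ combined with \emph{(ii)}, treating separately the three regimes: $s$ at most a constant times $\delta$, $s\in[\delta,e^t\delta]$, and $s$ at least a constant times $e^t\delta$.

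The \textbf{main obstacle} lies in the intermediate range $s\in[\delta,e^t\delta]$: naively multiplying the crude Frostman mass $C_0 s^\alpha$ by the pointwise bound $\sqrt{e^{-t}\delta/s}$ and summing dyadically yields only $C_0 e^{(\alpha-1)t}\delta^\alpha$, which misses the target by roughly a factor of $e^{(\alpha-9/10)t}$ when $\alpha$ is close to $1$. The thin-region refinement of \emph{(i)} must be exploited through a careful geometric covering that tracks both scales $\delta$ and $e^{-t}\delta$ simultaneously, in order to produce the required $e^{-t/10}$-gain. Once this pair-count bound is established, the Markov estimate gives $\mu(F_{\rm hm})\leq C\delta^\eta$ for $C$ sufficiently large and $\eta$ sufficiently small, as required.
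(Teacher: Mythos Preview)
Your reduction via Markov and Fubini to the bilinear quantity $\int\!\!\int W\,d\mu\,d\mu$ is correct, and the polynomial analysis of $W(\Delta)$ (the bounds (i) and (ii)) is accurate. You also correctly identify the obstacle: the naive dyadic sum over $|c|\in[\delta,e^t\delta]$ only yields $C_0 e^{(\alpha-1)t}\delta^\alpha$, which for $\alpha$ near $3/2$ misses the target by a factor of roughly $e^{0.6t}$.

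The gap is that your proposed fix --- ``exploit the thin-region constraint $|a-b^2/(2c)|\ll \delta^2/s$ via a careful covering'' --- cannot succeed using only the Frostman hypothesis. The thin slab at scale $|c|\approx s$ has dimensions roughly $(\delta^2/s)\times s\times s$; covering it by balls of radius $\rho\le s$ and applying the Frostman bound gives $(\mu*\check\mu)(\text{slab})\ll C_0 \rho^{\alpha}(s/\rho)^2=C_0 s^2\rho^{\alpha-2}$, which for $\alpha\le 3/2<2$ is \emph{minimised} at $\rho=s$, returning only the trivial bound $C_0 s^\alpha$. So no covering of the slab does better than ignoring the slab altogether, and the two-point energy approach is genuinely stuck at $C_0 e^{(\alpha-1)t}\delta^\alpha$. (One can also see this by placing $\mu$ in a thin tube around a single fibre of $\pi_{t,r_0}$: then most differences $\Delta$ lie exactly on the critical surface and the slab constraint gives nothing.)

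The paper's proof is not a two-point energy estimate at all; it is a four-point incidence argument in the style of Orponen--Venieri. After a preliminary reduction to a single annular scale $b\ge\delta^{1-3\eta}$ (itself requiring a separate projection lemma to handle concentration at small scales), one shows that for each $w\in F_{\rm hm}$ there are three well-separated parameter sets $I'_1,I'_2,I'_3\subset[\tfrac12,1]$ and corresponding sets $E_j(w)\subset \mathsf D_b(w)$ with $\mu(E_j(w))\gg C_0 b\,e^{-t/10}\delta^{\alpha-1-O(\eta)}$. This yields a large lower bound on the number of quadruples $(w,w_1,w_2,w_3)$ with $w_j\in E_j(w)$. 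For the upper bound one observes that any such triple $(w_1,w_2,w_3)$, together with the three separated parameters $r_j$, determines $w$ up to $O(e^{-t}\delta^{1-O(\eta)})$ via a $3\times 3$ Vandermonde system. Comparing the two bounds and using $b\ge\delta^{1-3\eta}$ with $3-2\alpha>0$ forces a contradiction. The $e^{-t}$ gain comes from this Vandermonde inversion (the first coordinate of $\pi_{t,r}$ is known to accuracy $e^{-t}\delta$), something the bilinear energy cannot see.
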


We will use the following elementary lemma in the proof of Lemma~\ref{lem: main 1st}. 

\begin{lemma}\label{lem: potential theory}
Let $\sigma$ be a probability measure on $B(0,10)$ which satisfies 
\[
\sigma(B(w,r))\leq \hat Cr^\beta\quad\text{for all $w\in\R^3$ and all $r\geq r_0$}
\]
where $1/2\leq \beta\leq 1$ and $\hat C>0$. 

The following holds for all $\vare_0$ small enough.
For every $b\geq r_0$, there exists a subset $E_b$ with $\sigma(B(0,10)\setminus E_b)\leq b^{\vare_0}$,
and for every $z\in E_b$, there is a subset $I_z\subset [0,1]$ with $|[0,1]\setminus I_z|\leq b^{\vare_0}$ so that for every $r\in I_z$
\[
\sigma\{z': |(1,r,\tfrac{r^2}{2})\cdot (z-z')|\leq b\}\ll \hat C b^{49\beta/100}
\]
\end{lemma}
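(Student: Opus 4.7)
The plan is a first-moment (energy-type) argument followed by two applications of Fubini/Chebyshev. For $z$ in the support of $\sigma$ and $r \in [0,1]$, set
\[
g_z(r) = \sigma\bigl(\{z' : |v_r \cdot (z - z')| \leq b\}\bigr), \qquad v_r = (1, r, r^2/2),
\]
and $\mathcal B = \{(z,r) : g_z(r) > \hat C b^{49\beta/100}\}$. It suffices to show
\[
\int_0^1 \sigma\bigl(\{z : (z,r) \in \mathcal B\}\bigr)\, dr \ll b^{2\vare_0},
\]
because then Fubini/Chebyshev yield $\sigma(B(0,10) \setminus E_b) \leq b^{\vare_0}$ for $E_b := \{z : |\{r \in [0,1] : (z,r) \in \mathcal B\}| \leq b^{\vare_0}\}$, and for $z \in E_b$ the set $I_z := \{r : (z,r) \notin \mathcal B\}$ has $|[0,1] \setminus I_z| \leq b^{\vare_0}$ and satisfies $g_z(r) \ll \hat C b^{49\beta/100}$ on it.

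To carry this out, I would start from the Fubini identity
\[
\int_0^1 \int g_z(r)\, d\sigma(z)\, dr = \iint \bigl|\{r \in [0,1] : |v_r \cdot (z-z')| \leq b\}\bigr|\, d\sigma(z)\, d\sigma(z').
\]
For fixed $w = z - z'$, the map $r \mapsto v_r \cdot w$ is a quadratic in $r$ with coefficient vector $(w_1, w_2, w_3/2)$. All norms on the three-dimensional space of quadratic polynomials on $[0,1]$ are equivalent, so the $L^\infty[0,1]$-norm of this polynomial is $\gg \|w\|$. A standard Remez-type small-value estimate for degree-two polynomials then yields
\[
\bigl|\{r \in [0,1] : |v_r \cdot w| \leq b\}\bigr| \ll \min\bigl(1,\, (b/\|w\|)^{1/2}\bigr).
\]

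Next I would split the $d\sigma \otimes d\sigma$ integration into the diagonal $\{\|z-z'\| \leq b\}$, of mass $\ll \hat C b^\beta$ by the Frostman hypothesis (applicable since $b \geq r_0$), and its complement, which a dyadic decomposition $\{2^k \leq \|z - z'\| \leq 2^{k+1}\}$ combined with $\sigma(B(z, 2^{k+1})) \ll \hat C 2^{(k+1)\beta}$ bounds by
\[
\hat C b^{1/2} \sum_{b \leq 2^k \ll 1} 2^{k(\beta - 1/2)} \ll \hat C b^{1/2} \log(1/b)
\]
for $\beta \in [1/2, 1]$. Combining and applying Chebyshev against the threshold $\hat C b^{49\beta/100}$ gives
\[
\int_0^1 \sigma\bigl(\{z : (z,r) \in \mathcal B\}\bigr)\, dr \ll b^{1/2 - 49\beta/100} \log(1/b).
\]
The exponent $1/2 - 49\beta/100$ is uniformly at least $1/100$ on $\beta \in [1/2,1]$, so for all $\vare_0$ small enough this is $\leq b^{2\vare_0}$, finishing the argument.

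The one nontrivial input is the polynomial small-value estimate, a form of Remez's inequality for quadratics, which I would handle by splitting according to whether the leading coefficient $w_3/2$ dominates (in which case one factors and uses $|P(r)| \geq |\gamma||r-r_1||r-r_2|$) or not (in which case the polynomial behaves essentially linearly, with the sup norm still comparable to $\|w\|$ by norm equivalence). The endpoint $\beta = 1/2$ contributes only the $\log(1/b)$ factor, easily absorbed since the exponent gap $1/2 - 49\beta/100$ stays bounded away from zero uniformly on the prescribed range.
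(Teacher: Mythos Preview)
Your argument is correct and is precisely the standard Kaufman-type transversality/energy computation the paper alludes to (the paper gives no details beyond citing \cite{Ohm2023projection}): a Remez-type sublevel estimate for the quadratic $r\mapsto v_r\cdot w$, a dyadic Frostman bound on the resulting double integral, and two layers of Chebyshev/Fubini. The exponent bookkeeping ($1/2-49\beta/100\ge 1/100$ on $[1/2,1]$, with the harmless $\log(1/b)$ at the endpoint) is exactly right.
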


\begin{proof}
This, rather weak estimate, follows from standard arguments, see e.g.~\cite[\S3]{Ohm2023projection}. 
It is worth noting that much stronger result holds where $49\beta/100$ is replaced by $\beta-o(1)$, 
see~\cite[Thm.~ B.1]{LM-PolyDensity} and~\cite{kenmki2017marstrandtype}. 
\end{proof}

%

\begin{lemma}\label{lem: small radius}
Let the notation and assumptions be as in Lemma~\ref{lem: main 1st}. The following holds for all small enough $\eta$. 
There exists some $b\geq \delta^{1-3\eta}$ and a subset $F'_{\rm hm}\subset F_{\rm hm}$ with 
\be\label{eq: F' hm}
\mu(F'_{\rm hm})\geq \delta^{3\eta/2}
\ee
so that for all $w\in F'_{\rm hm}$, we have  
\be\label{eq: r for F'}
|\{r\in [\tfrac12,1]: m^\delta(\pi_r(w)|\mathsf D_b(w))\geq C_0e^{-t/10}\delta^{\alpha-7\eta}\}|\geq \delta^{3\eta/2}.
\ee
\end{lemma}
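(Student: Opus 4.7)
My plan is a two-stage pigeonhole applied to a dyadic decomposition of the annular radius around $w$. For each $w \in F_{\rm hm}$, let $R_w \subset [\tfrac12, 1]$ denote the set of $r$ with $m^\delta(\pi_r(w)) \geq C_0 e^{-t/10}\delta^{\alpha - 18\eta}$, so by hypothesis $|R_w| \geq C\delta^\eta$. For each such $r$ I split
\[
m^\delta(\pi_r(w)) \leq \mu\bigl(B(w, \delta^{1-3\eta})\bigr) + \sum_{\substack{j \geq 0 \\ b_j \leq 2}} m^\delta\bigl(\pi_r(w) \,\big|\, \mathsf D_{b_j}(w)\bigr), \qquad b_j := 2^j \delta^{1-3\eta},
\]
and the goal is to pigeonhole a single dyadic scale $b = b_{j^\ast} \geq \delta^{1-3\eta}$ that dominates $m^\delta(\pi_r(w))$ uniformly on a large set of $(w,r)$.

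\textbf{Controlling the small-ball term.} I would first bound the small-ball contribution via the Frostman hypothesis, $\mu(B(w, \delta^{1-3\eta})) \leq C_0 \delta^{(1-3\eta)\alpha} = C_0 \delta^{\alpha - 3\alpha\eta}$. Since $\alpha \leq 3/2$, one has $18 - 3\alpha \geq 27/2$, so $\delta^{(18-3\alpha)\eta}$ has genuine room to absorb $e^{t/10}$: in the regime $\delta^{(18-3\alpha)\eta} \leq \tfrac12 e^{-t/10}$ --- a smallness condition on $\delta$ relative to $\eta$ and $t$ that I expect to sit inside the parameter hypotheses of Theorem~\ref{thm: main finitary} --- the small-ball term is $\leq \tfrac12 C_0 e^{-t/10}\delta^{\alpha - 18\eta}$. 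Hence for every $r \in R_w$ the surviving dyadic sum is at least $\tfrac12 C_0 e^{-t/10}\delta^{\alpha - 18\eta}$, and only annuli with $b_j \geq \delta^{1-3\eta}$ participate.

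\textbf{The two pigeonholes.} The number of relevant dyadic scales is $N \lesssim \log(1/\delta) \leq \delta^{-\eta/100}$ once $\delta$ is small. For each fixed $(w, r)$, pigeonholing in $j$ yields $j = j(w, r)$ with
\[
m^\delta\bigl(\pi_r(w) \,\big|\, \mathsf D_{b_{j}}(w)\bigr) \geq \frac{C_0 e^{-t/10}\delta^{\alpha - 18\eta}}{2N} \geq C_0 e^{-t/10}\delta^{\alpha - 7\eta},
\]
the last step using the ample gap $18\eta - 7\eta \gg \eta/100$. To globalize I rewrite
\[
\int_{F_{\rm hm}} |R_w|\,d\mu(w) = \sum_{j} \int_{F_{\rm hm}} \bigl|\{r \in R_w : j(w,r) = j\}\bigr|\,d\mu(w) \geq C\delta^\eta \mu(F_{\rm hm}),
\]
extract by pigeonhole a single $j^\ast$ that captures a $1/N$-fraction of the mass, and set $b := b_{j^\ast} \geq \delta^{1-3\eta}$. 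A Chebyshev step on the $w$-marginal at threshold $\delta^{3\eta/2}$ then isolates $F'_{\rm hm} \subset F_{\rm hm}$ of mass $\geq \delta^{3\eta/2}$ on which the required bound $\bigl|\{r : m^\delta(\pi_r(w)|\mathsf D_b(w)) \geq C_0 e^{-t/10}\delta^{\alpha - 7\eta}\}\bigr| \geq \delta^{3\eta/2}$ holds --- provided $\mu(F_{\rm hm})$ exceeds a constant multiple of $\delta^{3\eta/2}$, which is precisely the regime in which the lemma will be invoked in the intended contradiction proof of Lemma~\ref{lem: main 1st}.

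\textbf{Expected obstacle.} The delicate point will be the clean accounting of the losses $18\eta \to 7\eta$ across the dyadic pigeonhole and Chebyshev step, together with verifying that the regime assumption used to suppress the small-ball term sits inside the parameter hypotheses of Theorem~\ref{thm: main finitary}. This is where the restriction $\alpha \leq 3/2$ enters essentially: the margin $18 - 3\alpha$ must be strictly positive to beat $e^{t/10}$. Once these constants are in hand, the rest is routine dyadic bookkeeping.
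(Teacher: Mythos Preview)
Your dyadic pigeonhole and the Fubini/Chebyshev extraction are sound and match the paper's argument in the regime where $e^{t}$ is bounded by a small power of $\delta^{-1}$. The gap is precisely the point you flag as an ``expected obstacle'': the regime assumption $\delta^{(18-3\alpha)\eta}\leq\tfrac12 e^{-t/10}$ is \emph{not} a consequence of the hypotheses of Theorem~\ref{thm: main finitary}. The only constraint there linking $t$ and $\delta$ is $\delta\geq e^t\delta_0$, which allows $e^t$ to be as large as $\delta/\delta_0$; hence $e^{-t/10}$ may be far smaller than any fixed power $\delta^{c\eta}$. In that situation the Frostman bound $\mu(B(w,\delta^{1-3\eta}))\leq C_0\delta^{\alpha-3\alpha\eta}$ can vastly exceed the threshold $C_0e^{-t/10}\delta^{\alpha-18\eta}$, so the small ball may absorb all of $m^\delta(\pi_r(w))$ and your annular pigeonhole yields nothing.

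The paper handles this by splitting on whether $e^{-t}\geq \delta^{100\eta}$ (your regime, treated essentially as you propose) or $e^{-t}\leq \delta^{100\eta}$. In the hard case the small-ball contribution is controlled not by the global Frostman bound but by a restricted projection estimate (Lemma~\ref{lem: potential theory}) applied to the measure localized and rescaled to $B(w,\delta^{1-3\eta})$: one notes that any $w'\in B(w,\delta^{1-3\eta})$ with $\|\pi_r(w)-\pi_r(w')\|\leq\delta$ lies in the slab $\bigl\{|(1,r,\tfrac{r^2}{2})\cdot \tfrac{w-w'}{\delta^{1-3\eta}}|\leq e^{-t}\bigr\}$, and the lemma gives, for most $r$, a bound of order $e^{-0.3t}$ on the normalized measure of this slab. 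This suppresses the small-ball term by a genuine power of $e^{-t}$ and restores the annular decomposition. A side remark: the restriction $\alpha\leq 3/2$ plays only a cosmetic role here (bounding $3\alpha\eta\leq 4.5\eta$); its essential use is later in Lemma~\ref{lem: main 1st}, through the inequality $3-2\alpha>0$.
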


\begin{proof}
Let $b_0=\delta^{1-3\eta}$ and recall that $1\leq \alpha\leq 3/2$.

If $e^{-t}\geq\delta^{100\eta}$, then 
$e^{-t/10}\delta^{\alpha-18\eta}\geq \delta^{\alpha-8\eta}$. Therefore, 
\begin{multline*}
\{r\in [\tfrac12,1]: m^\delta(\pi_r(w))\geq C_0e^{-t/10}\delta^{\alpha-18\eta}\}\subset\\
\{r\in [\tfrac12,1]: m^\delta(\pi_r(w))\geq C_0\delta^{\alpha-8\eta}\}.
\end{multline*}
Now since $\mu(B(w,b_0))\leq C_0b_0^{\alpha}\leq C_0\delta^{\alpha-4.5\eta}$, 
there is some $b=b(w,r)\geq \delta^{1-3\eta}$ so that
\be\label{eq: find b-w-r small t}
m^\delta(\pi_r(w)|\mathsf D_b(w))\geq C_0e^{-t/10}\delta^{\alpha-7.5\eta}.
\ee

In view of this we assume $e^{-t}\leq\delta^{100\eta}$ for the rest of the argument. Put 
\[
\Xi_\delta:=\{w: \mu(B(w,b_0))> C_0 e^{-t/10}\delta^{\alpha-7\eta}\}
\]
Also for every $w\in F_{\rm hm}$, let 
\[
I(w)=\{r\in [\tfrac12,1]: m^\delta(\pi_r(w))\geq C_0e^{-t/10}\delta^{\alpha-18\eta}\cdot (\#F)\}.
\]

We now consider two possibilities: 

\medskip

{\em Case 1}: Assume $\mu(F_{\rm hm}\setminus\Xi_\delta)\geq \delta^{\eta}/100$.

In this case, let $F''_{\rm hm}=F_{\rm hm}\setminus\Xi_\delta$.
Then $\mu(F''_{\rm hm})\geq \delta^{\eta}/100$. 
Moreover, for every $w\in F''_{\rm hm}$, we have $\mu(B(w,b_0))\leq C_0 e^{-t/10}\delta^{\alpha-7\eta}$.
 
Now since for every $r\in I(w)$, we have 
\[
m^\delta(\pi_r(w))\geq C_0e^{-t/10}\delta^{\alpha-18\eta}
\]
there exists some $b=b(w,r)\geq \delta^{1-3\eta}$ so that
\be\label{eq: find b-w-r}
m^\delta(\pi_r(w)|\mathsf D_b(w))\gg C_0e^{-t/10}\delta^{\alpha-17\eta}.
\ee

\medskip 

{\em Case 2}: Assume $\mu(F_{\rm hm}\setminus\Xi_\delta)\leq \delta^{\eta}/100$.

In this case, we have  $\mu(F_{\rm hm}\cap\Xi_\delta)\geq \frac{C}2\delta^{\eta}$. 

Fix a maximal $b_0/2$ separated subset $\{w_1,\ldots, w_M\}$ of $F_{\rm hm}\cap\Xi_\delta$.
Discarding a subset of $F_{\rm hm}$ with measure $\leq\mu(F_{\rm hm})/100$, we will assume that 
\[
\mu(B(w_i, b_0))\gg\mu(B(w_i,4b_0))\qquad\text{ for all $1\leq i\leq M$},
\] 
where the implied constant is absolute.   
 
Let $\nu_i$ be the measure on $B(0,1)$ which is the image of 
\[
\tfrac{1}{\mu(B(w_i,b_0))}\mu|_{B(w_i,b_0)}
\] 
under the map $w\to \frac{w_i-w}{b_0}$. Similarly, let $\tilde\nu_i$ denote the measure on $B(0,4)$
is the image of 
\[
\tfrac{1}{\mu(B(w_i,4b_0))}\mu|_{B(w_i,4b_0)}
\] 
under the map $w\to \frac{w_i-w}{b_0}$.

Then for any $1\leq i\leq M$ and for $\sigma=\nu_i,\tilde\nu_i$, we have 
\[
\sigma(B(z,r))\leq e^{t/10}\delta^{-7\eta} r^\alpha \quad \text{for all $r\geq \delta_0/b_0$.}
\]
Thus, applying Lemma~\ref{lem: potential theory} with $\nu_i, \tilde\nu_i$ (for any $1\leq i\leq M$), 
there exists a subset $E_i\subset B(w_i,b_0)$, with 
\[
\nu_i(B(w_i,b_0)\setminus E_i)\ll \delta^{\vare_0}
\]
and for every $w\in E_i$, there is a subset $J_w$ with $|[0,1]\setminus J_w|\ll \delta^{\vare_0}$ so that if $r\in J_w$, then 
for $\sigma=\frac{1}{\mu(B(w_i,b_0))}\mu|_{B(w_i,b_0)}$ and $\sigma=\frac{1}{\mu(B(w_i,4b_0))}\mu|_{B(w_i,4b_0)}$, 
\be\label{eq: the set Ei}
\begin{aligned}
\sigma(\{w: |(1,r,\tfrac{r^2}{2})\cdot \tfrac{w-w'}{b_0}|\leq e^{-t}\})&\ll  e^{t/10}\delta^{-7\eta} e^{-49t/100}\\
&\ll e^{-0.3t},
\end{aligned}
\ee
where we used $e^{-t}\leq\delta^{100\eta}$ in the second inequality. 

Let $F''_{\rm hm}=(F_{\rm hm}\cap\Xi_\delta)\cap (\bigcup_i E_i)$. Then
\[
\mu(F''_{\rm hm})\gg \delta^{\eta}
\] 
For every $w\in F''_{\rm hm}$, let $\hat I(w)=I(w)\cap J_w$. Then $|\hat I(w)|\geq \frac12\delta^{\eta}$.
Moreover, for every $w\in F''_{\rm hm}$ and every $r\in \hat I(w)$, 
\begin{multline*}
\{w'\in B(w,b_0): \|\pi_r(w)-\pi_r(w')\|\leq \delta\}\subset \\
\{w'\in B(w_i,4b_0): |(1,r,r^2)\cdot \tfrac{w-w'}{b_0}|\leq e^{-t}\}
\end{multline*}
where $w\in B(w_i,b_0)$ (see~\eqref{eq: def pi r dim 3} for the definition of $\pi_r=\pi_{t, r}$ and recall that $b_0=\delta^{1-3\eta}$). 
Applying~\eqref{eq: the set Ei} with $\sigma=\frac{1}{\mu(B(w_i,4b_0))}\mu|_{B(w_i,4b_0)}$, 
\be
\begin{aligned}\label{eq: Using Ei}
\mu (\{w'\in B(w,b_0): \|\pi_r(w)-\pi_r(w')\|\leq \delta\}\ll e^{-0.3t}\mu(B(w_i,4b_0))&\\
\ll C_0e^{-0.3t}b_0^\alpha<C_0 e^{-0.2 t}\delta^{\alpha-7\eta}&,
\end{aligned}
\ee
where in the last inequality, we used $e^{-t}\leq \delta^{100\eta}$.  

In view of~\eqref{eq: Using Ei}, for every $w\in F''_{\rm hm}$ and all $r\in \hat I(w)$, there exists some 
$b>b_0=\delta^{1-3\eta}$ so that~\eqref{eq: find b-w-r} holds. That is:
\be\label{eq: b for the last case}
m^\delta(\pi_r(w)|\mathsf D_b(w))\gg C_0e^{-t/10}\delta^{\alpha-17\eta}.
\ee

Altogether, combining~\eqref{eq: find b-w-r small t},~\eqref{eq: find b-w-r}, and~\eqref{eq: b for the last case}, 
we have found a subset $F''_{\rm hm}\subset F_{\rm hm}$ with $\mu(F''_{\rm hm})\gg\delta^{\eta}$ and for every $w\in F''_{\rm hm}$
a subset $\hat I(w)\subset[\frac12,1]$ with $|\hat I(w)|\gg \delta^\eta$ so that the following holds. For every $w\in F''_{\rm hm}$ and all $w\in\hat I(w)$, there exists $b=b(w,r)\geq \delta^{1-3\eta}$ so that 
\[
m^\delta(\pi_r(w)|\mathsf D_b(w))\gg C_0e^{-t/10}\delta^{\alpha-17\eta}
\]  

Now applying pigeonhole principle (and Fubini's theorem), there exists $b\geq \delta^{1-3\eta}$ and 
$F'_{\rm hm}\subset F_{\rm hm}''$ with 
\[
\mu(F'_{\rm hm})\gg \delta^{3\eta/2}
\]
so that for all $w\in F'_{\rm hm}$, we have  
\[
|\{r\in [\tfrac12,1]: m^\delta(\pi_r(w)|\mathsf D_b(w))\geq C_0e^{-t/10}\delta^{\alpha-7\eta}\}|\geq \delta^{3\eta/2}.
\]
The proof is complete. 
\end{proof}

\begin{proof}[Proof of Lemma~\ref{lem: main 1st}]
Assuming $C$ is large enough, we may assume $\delta$ is small throughout the proof.  

In view of Lemma~\ref{lem: small radius}, see in particular~\eqref{eq: F' hm}, 
we will replace $F_{\rm hm}$ by $F_{\rm hm}'$ and assume that there is some $b\geq \delta^{1-3\eta}$ so that~\eqref{eq: r for F'} 
holds for all $w\in F_{\rm hm}$.
 
For every $w\in F_{\rm hm}$, set  
\[
I'(w)=\{r\in [\tfrac12,1]: m^\delta(\pi_r(w)|\mathsf D_b(w))\geq C_0e^{-t/10}\delta^{\alpha-7\eta}\}.
\]
Then $|I'(w)|\gg \delta^{3/2}$, see~\eqref{eq: r for F'}.  
Choose three subsets $I'_j(w)\subset \hat I'(w)$ for $j=1,2,3$, which satisfy the following properties 
\be\label{eq: three intervals}
|I'_j(w)|\gg \delta^{2\eta}\qquad\text{and}\qquad {\rm dist}(I'_i(w), I'_j(w))\gg \delta^{3\eta/2} \quad\text{for $i\neq j$}
\ee 
For $j=1,2,3$, define 
\[
E_j(w)=\{w'\in F\cap\mathsf D_b(w):\|\pi_r(w)-\pi_r(w')\|\leq \delta\text{ for some $r\in I'_j(w)$}\}. 
\]
We claim
\be\label{eq: meas of Ej}
\mu(E_j(w))\geq C_0be^{-t/10}\delta^{\alpha-1-6\eta}\qquad\text{for $j=1,2,3$.}
\ee
Fix one $j$ and cover $I'_j(w)$ with intervals $J_1,\ldots, J_N$ of size $C'\delta/b$ for some $C'$ which will be chosen to be large.
Thus
\be\label{eq: bounding N}
N\gg \delta^{2\eta}/(C'\delta/b)=b\delta^{2\eta}/(C'\delta)\gg b\delta^{2\eta-1}.
\ee
Since $b\geq \delta^{1-3\eta}$,~\eqref{eq: bounding N} in particular implies that $N\gg \delta^{-\eta}$. 

For each $J_i$, let $r_i\in J_i\cap I'_j(w)$. Discarding at most half of $J_i$'s, we will assume $|r_i-r_{i'}|\geq C'\delta/b$, and will continue to denote the collection by $J_1,\ldots, J_N$. For every $1\leq i\leq N$, let 
\[
E_{j,i}=\{w'\in E_j(w): \|\pi_{r_i}(w)-\pi_{r_i}(w')\|\leq \delta\}.
\]
Then $\mu(E_{j,i})\geq C_0e^{-\kappa t}\delta^{\alpha-7\eta}$. Moreover, $E_{j,i}\cap E_{j,i'}=\emptyset$. 
Indeed if $w'\in E_{j,i}\cap E_{j,i'}$, then 
\[
r_i,r_{i'}\in\{r\in[\tfrac12, 1]: \|\pi_r(w)-\pi_{r}(w')\|\leq \delta\}.
\]   
Thus the set of $r\in[\tfrac12, 1]$ so that 
\begin{align*}
&|(1,r, \tfrac{r^2}2)\cdot (w-w')|\leq e^{-t}\delta\quad\text{and}\\
&|(0,1, r)\cdot (w-w')|\leq \delta
\end{align*} 
has diameter $\geq C'\delta/b$.
Since $b\leq \|w-w'\|\leq 2b$, we get a contradiction so long as $C'$ is large enough. 

Using~\eqref{eq: bounding N} and $\mu(E_{j,i})\geq C_0e^{-\kappa t}\delta^{\alpha-7\eta}$, thus  
\[
\mu(E_j(w))\gg N \cdot \mu(E_{j,i})\gg C_0be^{-t/10}\delta^{\alpha-1-5\eta}, 
\]
for $j=1,2,3$, as we claimed in~\eqref{eq: meas of Ej}.
 
Using $\mu(F_{\rm hm})\gg \delta^{3\eta/2}$ and~\eqref{eq: meas of Ej}, we conclude that  
\begin{multline}\label{eq:  lower bound}
\mu(\{(w,w_1, w_2, w_3)\in F_{\rm hm}\times F^3: w_j\in E_j(w)\})\gg \\C_0^3b^3e^{-0.3t}\delta^{3\alpha-3-13\eta} 
\end{multline}
 
We now find an upper bound for the measure of the set on the left side of~\eqref{eq:  lower bound}.  
To that end, fix some $(w_1, w_2, w_3)\in F^3$ so that there exists some $w\in F_{\rm hm}$ with $w_j\in E_j(w)$. 
In particular, we have $w_j\in \mathsf D_b(w)$, where $b\geq \delta^{1-3\eta}$, and
\be\label{eq: wi-wj}
\|w_1-w_j\|\leq 4b, \qquad \text{for $j\in\{1,2,3\}$}.
\ee
Let $r_j\in I'_j(w)$ be so that $\|\pi_{r_j}(w)-\pi_{r_j}(w_j)\|\leq \delta$. Then $|r_i-r_j|\geq \delta^{3\eta/2}$ for $i\neq j$ and 
\[
(1,r_j,r_j^2)\cdot w=(1,r_j,r_j^2)\cdot w_j+ O(e^{-t}\delta)\quad\text{for $j\in\{1,2,3\}$}. 
\]
Thus $w$ belongs to a set with diameter $\ll e^{-t}\delta^{1-4.5\eta}$. This and~\eqref{eq: wi-wj} imply 
\[
\mu(\{(w,w_1, w_2, w_3)\in F_{\rm hm}\times F^3: w_j\in E_j(w)\})\ll C_0^3e^{-\alpha t}\delta^{\alpha- 4.5\eta\alpha} b^{2\alpha}
\]
Comparing this upper bound and~\eqref{eq:  lower bound}, we conclude  
\[
b^3e^{-3 t/10}\delta^{3\alpha-3-13\eta}\ll e^{-\alpha t}\delta^{\alpha- 4.5\eta\alpha} b^{2\alpha},
\]
which implies 
\[
\delta^{2\alpha-3-13\eta+4.5\eta\alpha}b^{3-2\alpha}\ll e^{(-\alpha +0.3) t}.
\]
Now using $b\geq \delta^{1-3\eta}$, $3-2\alpha>0$, and the above, we conclude that 
\be\label{eq: final contr}
\delta^{2\alpha-3-13\eta+4.5\eta\alpha}\delta^{(3-2\alpha)(1-3\eta)}=\delta^{-13\eta+4.5\eta\alpha-(9-6\alpha)\eta} \ll e^{(-\alpha+0.3) t}
\ee
However, $-13\eta+4.5\eta\alpha-(9-6\alpha)\eta\leq -6\eta$, since $3-2\alpha>0$. 
Assuming $\delta$ is small enough and recalling that $3/10<\alpha$,~\eqref{eq: final contr} cannot hold. 
This contradiction completes the proof.   
\end{proof}

\subsection*{Proof of Theorem~\ref{thm: main finitary}}
We now complete the proof of Theorem~\ref{thm: main finitary}, 
which is based on Lemma~\ref{lem: main 1st} and Fubini's theorem. 

Recall that for every for all $r\in[\tfrac12,1]$ and all $w\in F$, we put 
\[
m^\delta(\pi_r(w))=\mu\{w': \|\pi_r(w)-\pi_r(w')\|\leq \delta\}
\]
Let $\vare>0$ and let $\eta=\vare/20$. For all $r\in[\frac12,1]$, let 
\[
F_{\rm bad}(r)=\{w: m^\delta(\pi_r(w))\geq C_0e^{-t/10}\delta^{\alpha-18\eta}\}. 
\]
The claim in Theorem~\ref{thm: main finitary} follows if we show that there is a subset $I_\delta\subset [\frac12,1]$ with $|[\frac12,1]\setminus I_\delta|\ll\delta^{\eta/2}$
so that for all $r\in I_\delta$, we have 
\[
\mu(F_{\rm bad}(r))\ll\delta^{\eta/2}
\]

Let $C>1$, and assume that there exists a subset $I_{\rm bad}\subset [\frac12,1]$ with 
$|I_{\rm bad}|\geq C\delta^{\eta/2}$ so that for all $r\in I_{\rm bad}$, we have 
\[
\mu(F_{\rm bad}(r))\geq C\delta^{\eta/2}.
\] 
We will show this leads to a contradiction provided $C$ is large enough. 

Equip $[\tfrac12,1]\times F$ with the product measure $\Leb\times \mu$. Let 
\[
\mathsf E=\{(r,w)\in[\tfrac12,1]\times F: m^\delta(\pi_r(w))\geq C_0e^{-t/10}\delta^{\alpha-18\eta}\},
\]
and for every $w\in F$, let $\mathsf E_w=\{r: (r,w)\in\mathsf E\}$. 
The above then implies that 
\[
\Leb\times \mu(\mathsf E)\geq C^2\delta^\eta.
\] 
Set $F'=\{w\in F: |\mathsf E_w|\geq C \delta^\eta\}$. Then 
using Fubini's theorem, we conclude   
\[
\mu(F')\geq \tfrac12C^2\delta^{\eta}.
\]
Moreover, in in view of the definitions, for every $w\in F'$
\[
|\{r\in[\tfrac12, 1]: m^\delta(\pi_r(w))\geq C_0e^{-t/10}\delta^{\alpha-18\eta}\}|\geq C\delta^{\eta}
\]
This contradicts Lemma~\ref{lem: main 1st} provided $C$ is large enough.

The proof of complete.
\qed

\section{Proof of Theorem~\ref{thm:Improving Dimension SL2 irrrep}}
We will prove Theorem~\ref{thm:Improving Dimension SL2 irrrep} in this section. Before starting the proof let us fix some notation. 

For all $r\in[0,1]$, we put
$\xi(r)=(\tfrac{r}{1!}, \tfrac{r^2}{2!},\ldots, \tfrac{r^{n}}{n!}, \tfrac{r^{n + 1}}{(n + 1)!}) \in \R^{n + 1}$.
For all $1 \leq k\leq n + 1$, let 
\[
\mathfrak p_{r}^{(k)}:\R^{n + 1}\to \R^k
\]
be the projection onto the space spanned by 
$\{\xi'(r),\ldots, \xi^{(k)}(r)\}$ defined by
\[
\mathfrak p_{r}^{(k)}(w)=\Bigl(w\cdot \xi'(r), w\cdot \xi^{(2)}(r),\ldots, w\cdot \xi^{(k)}(r)\Bigr)
\]
where $w\cdot v$ is the usual inner product on $\R^{n + 1}$.
We define $\mathfrak{p}^{(0)}_r$ to be the zero map for all $r \in [0, 1]$. 

This family of projections has the following two properties. 
\begin{enumerate}
    \item The projection $\mathfrak{p}^{(n + 1)}_r$ is exactly the action of $u_r$ in this representation. 
    \item The difference between $\mathfrak p_{r}^{(k)}$ and the orthogonal projection to the space spanned by $\{\xi'(r),\ldots, \xi^{(k)}(r)\}$ is a bi-Lipschitz map where the Lipschitz constant depends only on $n$. 
\end{enumerate}

Recall that we let
\begin{align*}
    u_r = \begin{pmatrix}
        1 & r\\
         & 1
    \end{pmatrix}
\end{align*}
and 
\begin{align*}
    a_{t} = \begin{pmatrix}
        e^{{t}/2} & \\
         & e^{-{t}/2}
    \end{pmatrix}.
\end{align*}
With this normalization, there exists a basis for the irreducible representation $V$ so that we can identify $V \cong \R^{n + 1}$ and for all $w \in \R^{n + 1}$, 
\begin{align*}
    u_r.w ={}& \biggl(w \cdot \xi^{(1)}(r), \cdots, w \cdot \xi^{(n)}(r)\biggr),\\
    a_{{t}}. w = {}& (e^{\frac{n}{2}{t}}w_1, e^{\frac{n - 2}{2}{t}}w_2\cdots, e^{-\frac{n}{2}{t}}w_{n + 1}).
\end{align*}

For a probability measure $\mu$ supported on $B_{V}(0, 1)$, we define the following notions. If $\mu(A) > 0$, we let 
\[
\mu_{A} = \tfrac{1}{\mu(A)} \mu|_{A}
\]
be the normalized restriction of $\mu$ on $A$. 

In this section, we will call cubes of the form $\prod_{i = 1}^{n + 1}[\frac{n_i}{2^k}, \frac{n_i + 1}{2^k})$ dyadic cubes. For a dyadic cube $Q \subset \R^{n + 1}$, we set $\Hom_Q$ to be the homothety that maps $Q$ to $[0, 1)^{n+1}$. We set 
\[
\mu^Q = (\Hom_Q)_{\ast}\mu_Q
\]
to be the rescaled normalized restriction of $\mu$  on $Q$. We also refer to $\mu^Q$ as the conditional measure of $\mu$ on $Q$. We use the notion $\mathcal{D}_{\rho}$ for the collection of dyadic $\rho$-cubes and $A_Q = A \cap Q$. 

As it was already mentioned, we will prove Theorem~\ref{thm:Improving Dimension SL2 irrrep} using~\cite[Thm.~2.1]{GGW}. We record the following consequence of ~\cite[Thm.~2.1]{GGW}. 

\begin{thm}[\cite{GGW}]
\label{thm: proj GGW}
Let $1\leq k\leq n + 1$ and let $0<\alpha\leq k$.
Let $\mu$ be the uniform measure on a finite set $F\subset B_{\R^{n + 1}}(0,1)$ satisfying 
\[
\mu(B_{\R^{n + 1}}(w, \delta))\leq C_0 \delta^\alpha\qquad\text{for all $w$ and all $\delta\geq \delta_0$}
\]
where $C_0>0$.

Let $0<\vare<10^{-4}\alpha$.
For every $\delta\geq \delta_0$, there exists a subset 
$J_{\delta}\subset [0,1]$ with $|[0,1]\setminus J_{\delta}|\ll_\vare\delta^{\star\vare^2}$ so that the following holds. 
Let $r\in J_\delta$, then there exists a subset $F_{\delta,r}\subset F$ with 
\[
\mu(F\setminus F_{\delta,r})\ll_\vare \delta^{\star\vare^2}
\]
such that for all $w\in F_{\delta, r}$ we have 
\[
\mu\Bigl(\{w'\in F: \|\mathfrak p_{r}^{(k)}(w)-\mathfrak p_{r}^{(k)}(w')\|\leq \delta\}\Bigr)\ll_\vare C_0\delta^{\alpha-\vare}
\] 
\end{thm}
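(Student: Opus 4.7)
The plan is to deduce Theorem~\ref{thm: proj GGW} from~\cite[Thm.~2.1]{GGW} via a standard duality argument combining Fubini with pigeonhole, in the same spirit as the deduction of Theorem~\ref{thm: main finitary} from Lemma~\ref{lem: main 1st} carried out in Section~\ref{sec: proof of main 1}. I would argue by contradiction. Fix large constants $D_1, D_2$ to be chosen later, set $\eta = \vare/D_1$, and put
\[
F_{\rm bad}(r) := \Bigl\{w \in F : \mu\bigl(\{w' : \|\mathfrak p_{r}^{(k)}(w) - \mathfrak p_{r}^{(k)}(w')\| \leq \delta\}\bigr) \geq C_0 \delta^{\alpha - D_1 \eta}\Bigr\}.
\]
Suppose the theorem fails. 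After extracting a good set of $r$'s via pigeonhole, one may assume there exists $I_{\rm bad} \subset [0,1]$ with $|I_{\rm bad}| \geq D_2 \delta^{\eta^2/2}$ such that $\mu(F_{\rm bad}(r)) \geq D_2 \delta^{\eta^2/2}$ for every $r \in I_{\rm bad}$.

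For each $r \in I_{\rm bad}$ I would cover $\mathfrak p_{r}^{(k)}(F_{\rm bad}(r))$ by $\delta$-balls $\{\mathsf B_{i,r}\}$ in $\R^k$ and form the tubes $\mathbb T_{i,r} := (\mathfrak p_{r}^{(k)})^{-1}(\mathsf B_{i,r}) \cap B_{\R^{n+1}}(0,1)$. By the definition of $F_{\rm bad}(r)$ each tube carries $\mu$-mass at least $C_0 \delta^{\alpha - D_1 \eta}$, so the collection $\mathcal T_r = \{\mathbb T_{i,r}\}$ satisfies $\#\mathcal T_r \ll C_0^{-1} \delta^{-\alpha + D_1 \eta}$. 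Since $\mathfrak p_r^{(k)}$ differs from the orthogonal projection onto $\Span\{\xi'(r),\ldots,\xi^{(k)}(r)\}$ only by an $n$-dependent bi-Lipschitz map, the $\mathbb T_{i,r}$ are, up to bounded constants, standard $\delta$-tubes along the moment curve, precisely as required by~\cite{GGW}. Next, choose a maximal $\delta$-separated subset $\Lambda_\delta \subset I_{\rm bad}$, let $\rho$ be the uniform probability measure on $\Lambda_\delta$, and consider
\[
\mathsf E = \{(r,w) \in \Lambda_\delta \times F : w \in F_{\rm bad}(r)\};
\]
the hypothesis yields $\rho \times \mu(\mathsf E) \gg D_2^2 \delta^{\eta^2}$, and pigeonholing in $w$ produces $F' \subset F$ with $\mu(F') \gg D_2^2 \delta^{\eta^2}$ such that every $w \in F'$ lies in $\gg \delta^{\eta^2 - 1}$ tubes from $\bigcup_{r \in \Lambda_\delta} \mathcal T_r$. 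Feeding $\mu' := \mu|_{F'}$ and $\{\mathcal T_r\}_{r\in\Lambda_\delta}$ into~\cite[Thm.~2.1]{GGW} then provides a lower bound of the form
\[
\sum_{r \in \Lambda_\delta}\#\mathcal T_r \gg_{n,\vare,\alpha} C_0^{-1}\mu(F')\,\delta^{-1-\alpha + D\eta}
\]
for some $D = D(n)$. Combined with $\#\Lambda_\delta \ll \delta^{-1}$ and the per-$r$ upper bound above, this forces the inequality $D_2^2 \delta^{\eta^2 + D\eta} \ll \delta^{D_1 \eta}$, which fails once $D_1 \gg D$ and $\delta$ is sufficiently small. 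This contradiction proves the theorem.

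The main obstacle — really the only non-routine point — is aligning the tube/incidence hypotheses of~\cite[Thm.~2.1]{GGW} with the $\mathfrak p_r^{(k)}$-tubes extracted above, and tracking the double loss $\delta^{\star\vare^2}$ in both $|[0,1] \setminus J_\delta|$ and $\mu(F \setminus F_{\delta,r})$ so that it is compatible with the single $\vare$-loss inside the exponent $\delta^{-\alpha + D\eta}$ supplied by~\cite{GGW}. These are bookkeeping issues rather than new ideas, and the required translation is essentially already present in~\cite[\S2]{GGW}.
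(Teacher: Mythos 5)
Your proposal is correct and follows essentially the same route as the paper's proof: the same contradiction setup with $F_{\rm bad}(r)$ and $I_{\rm bad}$, the same tubes $\mathbb T_{i,r}=(\mathfrak p_r^{(k)})^{-1}(\mathsf B_{i,r})\cap B_{\R^{n+1}}(0,1)$ with the per-$r$ count $\ll C_0^{-1}\delta^{-\alpha+D_1\eta}$, the same Fubini/pigeonhole step over a maximal $\delta$-separated set $\Lambda_\delta$ producing a set $F'$ of points lying in $\gg\delta^{\eta^2-1}$ tubes, and the same application of \cite[Thm.~2.1]{GGW} to $\mu|_{F'}$ yielding a contradictory lower bound on $\sum_r\#\mathcal T_r$ once $D_1$ is large and $\delta$ small. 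The only differences (normalizing $\rho$ on $\Lambda_\delta$ rather than on its extension $\hat\Lambda_\delta$, and summing the upper bound over $r$ instead of pigeonholing to a single $r$) are cosmetic bookkeeping.
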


\begin{proof}
We deduce this from~\cite[Thm.~2.1]{GGW}. The argument is more or less standard. 
Indeed it is similar to the deduction of Theorem~\ref{thm: main finitary} from Lemma~\ref{lem: main 1st} 
and to (a finitary version of) the argument in~\cite[\S2]{GGW}. See also \cite[Section 2]{JL24}. 


Since $1\leq k\leq n + 1$ is fixed throughout the argument, we will denote $\mathfrak p_{r}^{(k)}$ by $\mathfrak p_r$. 
Adapting the notation $m^\delta$ from the previous section: for every $r\in[\frac12, 1]$ and all $w\in F$, put 
\[
m^\delta(\pfrak_r(w))=\mu\{w': \|\pfrak_r(w)-\pfrak_r(w')\|\leq \delta\}.
\]

Let $D_1, \ldots$ be large constants which will be explicated later. 
Let $\eta=\vare/D_1$. For all $r\in[\frac12, 1]$, let 
\[
F_{\rm bad}(r)=\{w: m^\delta(\pfrak_r(w))\geq C_0\delta^{\alpha-D_1\eta}\}.
\]

Assume contrary to the claim in Theorem~\ref{thm: proj GGW}
that there exists a subset $I_{\rm bad}\subset [\frac12,1]$ with 
$|I_{\rm bad}|\geq D_2\delta^{\eta^2/2}$ so that for all $r\in I_{\rm bad}$, we have 
\[
\mu(F_{\rm bad}(r))\geq D_2\delta^{\eta^2/2}.
\] 
We will get a contradiction with~\cite[Thm.~2.1]{GGW}, provided that $D_i$'s are large enough. 

First note that, for every $r\in I_{\rm bad}$, the number of $\delta$-boxes $\{\mathsf B_{i,r}\}$ 
required to cover $\pfrak_r(F_{\rm bad}(r))$ is $\leq D_3C_0^{-1}\delta^{-\alpha+D_1\eta}$. 
Following~\cite{GGW}, let $\mathcal T_r=\{\mathbb T_{i,r}\}$ where $\mathbb T_{i,r}=\pfrak_r^{-1}(\mathsf B_{i,r})\cap B_{\R^{n + 1}}(0,1)$; 
note that 
\be\label{eq: number of Tr}
\#\mathcal T_r\leq D_3 C_0^{-1}\delta^{-\alpha+D_1\eta}.
\ee
 
Select a maximal $\delta$-separated subset $\Lambda_\delta\subset I_{\rm bad}$ and extend this to a maximal 
$\delta$-separated subset $\hat\Lambda_\delta$ of $[\frac12,1]$. 

Let $\rho$ denote the uniform measure on $\hat\Lambda_\delta$, and as it was done in the proof of Theorem~\ref{thm: main finitary},
equip $\hat\Lambda_\delta\times F$ with the product measure $\rho\times \mu$, and let  
\begin{align*}
\mathsf E&=\{(r,w)\in \Lambda_\delta\times F: m^\delta(\pi_r(w))\geq C_0\delta^{\alpha-D_1\eta}\}\\
&=\{(r,w)\in \Lambda_\delta\times F: w\in F_{\rm bad}(r)\}.
\end{align*}
Then the above implies that $\rho\times \mu(\mathsf E)\geq D_2^2\delta^{\eta^2}$.

For every $w\in F$, let $\mathsf E_w=\{r\in\Lambda_\delta: (r,w)\in\mathsf E\}$, and   
set 
\[
F'=\{w\in F: \rho(\mathsf E_w)\geq D_2 \delta^{\eta^2}\}.
\] 
Then using Fubini's theorem, we conclude $\mu(F')\geq \tfrac12D_2^2\delta^{\eta^2}$.

Recall that $\rho$ is the normalized counting measure on $\Lambda_\delta$. The above definitions thus imply
\[
\sum_{r\in\Lambda_\delta}1_{\mathbb T_r}(w)\geq D_3\delta^{\eta^2-1}\qquad\text{for all $w\in F'$}
\]
where $D_3=O(D_2)$ and the implied constant is absolute. 
Let $\mu'$ denote the restriction of $\mu$ to $F'$.
Now by~\cite[Thm 2.1]{GGW}, applied with $\eta^2$, $\mu'$
and $\{\mathcal T_r: r\in\Lambda_\delta\}$, we have  
\[
\begin{aligned}
\sum_{r\in\Lambda_\delta} \#\mathcal T_r&\geq D_4(n, \vare, \alpha) C_0^{-1}\mu'(\R^{n + 1})\delta^{-1-\alpha+D\eta}\\
&\geq \tfrac12D_2^2 D_4(n, \vare, \alpha) C_0^{-1} \delta^{\eta^2}\delta^{-1-\alpha+D\eta}
\end{aligned}
\]
where $D=10^{10(n + 1)}$ and in the second line we used $\mu'(\R^{n + 1})=\mu(F')\geq \tfrac12D_2^2\delta^{\eta^2}$.
Thus there exists some $r\in\Lambda_\delta$ so that 
\be\label{eq: GGW and pigeonhole}
\#\mathcal T_r \geq \tfrac12C_0^{-1}D_4(n, \vare, \alpha) D_2^2 \delta^{-\alpha+(D+1)\eta}.
\ee
Now comparing~\eqref{eq: GGW and pigeonhole} and~\eqref{eq: number of Tr} we get a contradiction so long as $D_1$ is large enough and $\delta$ is small enough. The proof is complete. 
\end{proof}

Now we prove Theorem~\ref{thm:Improving Dimension SL2 irrrep}. 

\begin{proof}
    Without loss of generality, we assume both $e^{\frac{n{t}}{2}}\delta$ and $\delta$ are dyadic numbers. This will only result a constant factor depending only on the ambient space in the final estimate. 
    
    For simplicity, let $\rho = e^{\frac{n{t}}{2}}\delta$ and $k = \lfloor\alpha\rfloor$. Fix $\delta$ satisfying $e^{\frac{n}{2}{t}}\delta_0 \leq \delta \leq e^{-\frac{n}{2}{t}}$. 

    Let $P$ be the tube of size
    \begin{align*}
        e^{-n{t}} \times  \cdots \times e^{-{t}} \times 1
    \end{align*}
    Centered at the origin, with its sides aligned along the coordinate axes, via the identification $V \cong \R^{n + 1}$. We note that $u_r^{-1} a_{{t}}^{-1} B(x, \delta)$ is a translate of $u_r^{-1}.P$. We denote it by $w + u_r^{-1}.P$. 
    
    We will prove the theorem by estimating $\mu(w + u_r^{-1}.P)$. First note that since tubes $w + u_r^{-1}.P$ lies in $O(1)$ many dyadic $\rho$-cubes, it suffices to estimate $\mu_Q(w + u_r^{-1}.P)$ for those dyadic $\rho$-cubes $Q$ intersecting $w + u_r^{-1}.P$ non-trivially. To this end, we first study the dimension condition for those conditional measure $\mu^Q$. 
    
    For all $s \geq \rho^{-1}\delta_0$, we have
    \begin{align*}
        \mu^Q(B(x, s)) ={}& \mu_Q(\Hom_Q^{-1}B(x, s))\\
        ={}& \frac{1}{\mu(Q)}\mu|_Q(B(x', \rho s))\\
        \leq{}& \frac{C\rho^\alpha}{\mu(Q)}s^\alpha.
    \end{align*}

    Now we study the relation between $\mu_Q(w + u_r^{-1}.P)$ with the projections 
    $\Bigl\{\mathfrak{p}_r^{(l)}\Bigr\}_{r, l}$. This follows from the standard `projection and slicing' picture, as we now explicate . Namely, in our case, for all $l = 0, \cdots, n + 1$, there exists some rectangle $R_{l}(w) \subset \R^l$ with sides parallel to the axis and side-length
    \begin{align*}
        e^{-n{t}} \times \cdots \times e^{-(n - l + 1){t}}
    \end{align*}
    so that 
    \begin{align*}
        w + u_r^{-1}.P \subseteq  \Bigl(\mathfrak{p}_r^{(k)}\Bigr)^{-1}(R_{l}(w)).
    \end{align*}
    It suffices to study $\Bigl(\mathfrak{p}_r^{(k)}\Bigr)_{\ast} \mu^Q(R_{l}(w))$. We will focus on the cases where $l = k$ and $l = k + 1$. 

    Apply Theorem~\ref{thm: proj GGW} with the measures $\{\mu^Q\}_Q$, $\mathfrak{p}^{(k)}_r$, scale $e^{-n{t}}$, and $\epsilon/n$. Using Fubini's theorem, there exists $\mathcal{E}_{{t}, \delta} \subset [0, 1]$ with $|[0, 1] \setminus \mathcal{E}_{{t}, \delta}| \ll_{\epsilon} e^{-\star\epsilon^2{t}}$ so that for all $r \in [0, 1] \setminus \mathcal{E}_{{t}, \delta}$, there exists $\mathcal{D}_{\rho}(r)$ with 
    \[
    \sum_{Q \notin \mathcal{D}_{\rho}(r)} \mu(Q) \ll_\epsilon e^{-\star\epsilon^2{t}}.
    \]
    Moreover, for all $Q \in \mathcal{D}_{\rho}(r)$, there exists $F_{Q}(r) \subset F_Q$ with $\mu_Q(F_Q \setminus F_{Q}(r)) \ll_\epsilon e^{-\star\epsilon^2{t}}$ so that for all $w \in V$, we have
    \begin{align*}
        \mu^Q \Bigl(F_Q(r)\cap(w + u_r^{-1}.P)\Bigr) \leq{}& \Bigl(\mathfrak{p}_r^{(k)}\Bigr)_{\ast} \mu^Q(R_{k}(w))\\ \ll_\epsilon{}& \frac{C\rho^\alpha}{\mu(Q)} \prod_{j = 0}^{k - 1}e^{-(n - j){t}} e^{\epsilon {t}}.
    \end{align*}
    The last inequality follows from the fact that we can cover $R_{k}(w)$ with $O(\prod_{j = 0}^{k - 1}e^{j{t}})$ many $e^{-n{t}}$ balls. 

    Let $F_r^{(k)} = \sqcup_{Q \in \mathcal{D}_{\rho}(r) } F_Q(r)$, we have
    \begin{align}\label{eqn:dim improve 1}
        \begin{aligned}
            \mu(F_r^{(k)} \cap u_r^{-1}a_{{t}}^{-1}B(x, \delta)) \ll_{\epsilon}{}&  C(e^{\frac{n}{2}{t}}\delta)^\alpha e^{-\frac{(2n - k + 1)k}{2}{t}}\delta^{-\epsilon}\\
        ={}& Ce^{-\frac{{t}}{2}((2n - \lfloor\alpha\rfloor + 1)\lfloor\alpha\rfloor - n\alpha)}\delta^{\alpha - \epsilon}.
        \end{aligned}
    \end{align}

    Now we apply Theorem~\ref{thm: proj GGW} with $\{\mu^Q\}_Q$, $\mathfrak{p}^{(k + 1)}_r$, scale $e^{-n{t}}$, and $\epsilon/n$. Using Fubini's theorem, again, there exists $\mathcal{E}_{{t}, \delta}' \subset [0, 1]$ with $|[0, 1] \setminus \mathcal{E}_{{t}, \delta}| \ll_{\epsilon} e^{-\star\epsilon^2{t}}$ so that for all $r \in [0, 1] \setminus \mathcal{E}_{{t}, \delta}'$, there exists $\mathcal{D}_{\rho}(r)'$ with 
    \[
    \sum_{Q \notin \mathcal{D}_{\rho}(r)'} \mu(Q) \ll_\epsilon e^{-\star\epsilon^2{t}}.
    \]
    Moreover, for all $Q \in \mathcal{D}_{\rho}(r)'$, there exists $F_{Q}(r)' \subset F_Q$ with $\mu_Q(F_Q \setminus F_{Q}(r)') \ll_\epsilon e^{-\star\epsilon^2{t}}$ so that for all $w \in V$, we have
    \begin{align*}
        \mu^Q (F_{Q}(r)'\cap(w + u_r^{-1}.P)) \leq{}& (\mathfrak{p}_r^{(k + 1)})_{\ast} \mu^Q(R_{k + 1}(w))\\ \ll_\epsilon{}& \frac{C\rho^\alpha}{\mu(Q)} \Biggl(\prod_{j = 0}^{k}e^{j{t}}\Biggr) e^{-n\alpha{t}} e^{\epsilon{t}}.
    \end{align*}
    As before, the last inequality follows from the fact that we can cover $R_{k+1}(w)$ via $O(\prod_{j = 0}^{k}e^{j{t}})$ many $e^{-n{t}}$ balls. 
    
    Let $F_r^{(k + 1)} = \sqcup_{Q \in \mathcal{D}_{\rho}(r) } F_Q(r)$, we have
    \begin{align}\label{eqn:dim improve 2}
        \begin{aligned}
            \mu(F_r^{(k + 1)} \cap u_r^{-1}a_{{t}}^{-1}B(x, \delta)) \ll_\epsilon{}& C(e^{\frac{n}{2}{t}}\delta)^\alpha e^{\frac{k(k + 1)}{2}{t}} e^{-n\alpha{t}} e^{\epsilon{t}}\\
        ={}& C e^{-(n\alpha - \lfloor\alpha\rfloor(\lfloor\alpha\rfloor + 1))\frac{{t}}{2}}\delta^{\alpha - \epsilon}.
        \end{aligned}
    \end{align}

    Let $\mathcal{E} = \mathcal{E}_{{t}, \delta} \cup \mathcal{E}_{{t}, \delta}'$ and for all $r \notin \mathcal{E}$, let $F_{t, \delta, r} = F_r^{(k)} \cap F_r^{(k + 1)}$. We have
    \begin{align*}
        |[0, 1]\setminus \mathcal{E}| \ll_\epsilon e^{-\star\epsilon^2{t}}
    \end{align*}
    and
    \begin{align*}
        \mu(F \setminus F_r) \ll_\epsilon e^{-\star\epsilon^2{t}}.
    \end{align*}
    Combining \eqref{eqn:dim improve 1} and \eqref{eqn:dim improve 2}, the theorem follows. 
\end{proof}

\bibliographystyle{halpha}
\bibliography{papers}

\end{document}